\newtheorem{theorem}{Theorem}[section]
\newtheorem{corollary}[theorem]{Corollary}
\theoremstyle{definition}
\newcommand{\TT}{\ensuremath{\mathbb{T}}}
\newcommand{\ZZ}{\ensuremath{\mathbb{Z}}}
\newcommand{\RR}{\ensuremath{\mathbb{R}}}
\newcommand{\CC}{\ensuremath{\mathbb{C}}}
\newcommand{\cg}{\ensuremath{\mathcal{G}}}
\newcommand{\vb}{\ensuremath{\mathbf{b}}}
\newcommand{\vf}{\ensuremath{\mathbf{f}}}
\newcommand{\vu}{\ensuremath{\mathbf{u}}}
\newcommand{\vw}{\ensuremath{\mathbf{w}}}
\newcommand{\vv}{\ensuremath{\mathbf{v}}}
\newcommand{\vz}{\ensuremath{\mathbf{z}}}
\newcommand{\one}{\ensuremath{\mathbf{1}}}
\def \< {\langle}
\def \> {\rangle}
\begin{document}

\title[An improvement on the Delsarte-type LP-bound]{An improvement on the Delsarte-type LP-bound with application to MUBs}

\author[M. Matolcsi]{M. Matolcsi}
\address{M. M.: Alfr\'ed R\'enyi Institute of Mathematics,
Hungarian Academy of Sciences POB 127 H-1364 Budapest, Hungary
Tel: (+361) 483-8307, Fax: (+361) 483-8333}
\email{matolcsi.mate@renyi.mta.hu}

\author[M. Weiner]{M. Weiner}
\address{M. W.: Budapest University of Technology \& Economics (BME),
Department of Mathematical Analysis,
M\"uegyetem rkp. 3--9,  H-1111 Budapest, Hungary}
\email{mweiner@math.bme.hu}

\thanks{M. M. was supported bt OTKA grant No.\! 109789 and by ERC-AdG 321104.
M. W. was supported by OTKA grant No.\! 104206 and by the ``Bolyai J\'anos" Research Scholarship of the Hungarian Academy of Sciences''.}

\begin{abstract}
The linear programming (LP) bound of Delsarte can be applied to several problems in various branches of mathematics. We describe a general Fourier analytic method to get a slight improvement on this bound. We then apply our method to the problem of mutually unbiased bases (MUBs) to prove that the Fourier family $F(a,b)$ in dimension 6 cannot be extended to a full system of MUBs.
\end{abstract}

\maketitle

\bigskip

\section{Introduction}

The linear programming bound of Delsarte was first applied in \cite{delsarte} in coding theory to the following problem: determine the maximal cardinality $A(n,d)$ of binary codewords of length $n$ such that each two of them differ in at least $d$ coordinates. In the past decades the method of Delsarte has been applied to several other problems, most notably to sphere packings \cite{cohnelkies}, and the unit-distance graph of $\RR^n$ \cite{vallfilho}. In this note we will recall a Fourier analytic formulation of Delsarte's bound \cite{mubfourier}. This is not the most general form of the method but it captures most of the applications and is simple enough to require only elementary Fourier analysis.

\medskip

After the description of the LP-bound we give a general method to get a slight improvement on it. Unfortunately, this improvement is usually very small numerically. However, in ceratin problems the Delsarte bound is already sharp in itself, and {\it any} improvement on it can lead to non-existence results. This is exactly the situation in the problem of mutually unbiased bases (MUBs), as described in \cite{mubfourier}. We will apply our improved bound to show that the Fourier family $F(a,b)$ of complex Hadamard matrices cannot be extended to a full system of MUBs in dimension 6. This result was previously proven by a massive computer search after a discretization scheme \cite{pauli}. Our proof here is completely elementary and could also lead to similar results for other families of complex Hadamard matrices.

\section{The Delsarte bound}

We recall the Fourier analytic formulation of Delsarte's bound, as described in \cite{mubfourier} and \cite{intersective1}.

\medskip

Let $G$ be a compact Abelian group, and let a symmetric subset $A=-A\subset G$, $0\in A$ be given. We will call $A$ the 'forbidden' set. We would like  to determine the maximal cardinality of a set $B=\{b_1, \dots b_m\}\subset G$ such that all differences $b_j-b_k\in A^c\cup \{0\}$ (in other words, all differences avoid the forbidden set $A$).

\medskip

We will also need the dual group $\hat{G}$, i.e the group of multiplicative characters from $G$ to $\CC$. In this section we will use the multiplicative notation for the operation of the dual group, i.e. for $\gamma_1, \gamma_2\in\hat{G}$ and $x\in G$ we define $(\gamma_1\gamma_2)(x)=\gamma_1(x)\gamma_2(x)$. In particular, the unit element of the dual group (i.e. the constant 1 function) will be denoted by $\one\in \hat{G}$.

\medskip

We will use the normalized Haar measure on $G$ (i.e. the measure of $G$ is 1), and the following definition for the Fourier transform for any function $f: G\to \CC$: $\hat{f}(\gamma)=\int_{x\in G} f(x)\gamma(x)dx$.

\medskip

Let us now recall Delsarte's bound in this formulation \cite{mubfourier, intersective1}. We also recall the proof here because we will need it later.

\begin{theorem}\label{delsartelemma} (Delsarte's bound)\\
Assume we have a witness function $h: G\to \RR$ with the following properties: $h(x)\le 0$ for all $x\in A^c$, $\hat{h}(\gamma)\ge 0$ for all $\gamma\in \hat{G}$, the Fourier inversion formula is valid for $h$ (in particular, $h$ can be any finite linear combination of characters). Then for any $B=\{b_1, \dots b_m\}\subset G$ such that $b_j-b_k\in A^c\cup \{0\}$ we have $|B|\le \frac{h(0)}{\hat{h}(\one)}$.
\end{theorem}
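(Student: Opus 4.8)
The plan is to prove the inequality by evaluating a certain "energy" sum in two different ways—once by summing the values of $h$ over all pairwise differences, and once by expanding $h$ in its Fourier series—and then comparing the two. This is the standard double-counting (or "positive-definiteness") argument underlying all LP-bounds.

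Concretely, consider the quantity
\[
S = \sum_{j=1}^m \sum_{k=1}^m h(b_j - b_k).
\]
First I would estimate $S$ from above using the hypotheses on $h$ in physical space. The diagonal terms with $j=k$ contribute $h(0)$ each, giving $m\,h(0)$. For the off-diagonal terms, the assumption $b_j - b_k \in A^c \cup \{0\}$ together with $j \ne k$ forces $b_j - b_k \in A^c$, where $h \le 0$ by hypothesis. Hence every off-diagonal term is nonpositive, and we obtain $S \le m\,h(0)$.

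Next I would compute $S$ from below using the Fourier side. Writing the Fourier inversion formula $h(x) = \sum_{\gamma \in \hat{G}} \hat{h}(\gamma)\,\overline{\gamma(x)}$ (valid by assumption, since $h$ is a finite combination of characters), I would substitute $x = b_j - b_k$ and swap the order of summation. The key algebraic point is that the character property $\gamma(b_j - b_k) = \gamma(b_j)\overline{\gamma(b_k)}$ lets the double sum over $j,k$ factor as a squared modulus:
\[
S = \sum_{\gamma \in \hat{G}} \hat{h}(\gamma) \,\Bigl| \sum_{j=1}^m \gamma(b_j) \Bigr|^2.
\]
Since $\hat{h}(\gamma) \ge 0$ for all $\gamma$, every term in this sum is nonnegative; in particular, discarding all terms except the one at $\gamma = \one$ (where $\gamma(b_j) = 1$ and the inner sum equals $m$) yields the lower bound $S \ge \hat{h}(\one)\,m^2$.

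Combining the two estimates gives $\hat{h}(\one)\,m^2 \le S \le m\,h(0)$, and dividing by $m\,\hat{h}(\one) > 0$ produces $m \le h(0)/\hat{h}(\one)$, which is the claim. I do not anticipate a serious obstacle here—the argument is short and self-contained—but the one place demanding care is the interchange of summation needed to pass from the inserted Fourier inversion to the factored form; this is where the explicit hypothesis that $h$ is a finite linear combination of characters is used, since it makes the sum over $\hat{G}$ finite and the rearrangement unconditionally valid. One should also confirm the normalization convention so that the $\gamma = \one$ term indeed isolates $\hat{h}(\one)$.
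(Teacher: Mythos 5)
Your proof is correct and is essentially identical to the paper's: the paper evaluates the same quantity $S=\sum_{\gamma}|\hat{B}(\gamma)|^2\hat{h}(\gamma)=\sum_{j,k}h(b_j-b_k)$ from both the Fourier side and the physical side and compares the two bounds, exactly as you do. The only point to watch is the paper's Fourier convention $\hat{f}(\gamma)=\int f(x)\gamma(x)\,dx$, under which inversion reads $h(x)=\sum_\gamma \hat{h}(\gamma)\gamma(x)$ rather than with $\overline{\gamma(x)}$ -- a harmless normalization detail you already flagged.
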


\begin{proof}
For any $\gamma\in \hat{G}$ define $\hat{B}(\gamma)=\sum_{j=1}^m \gamma (b_j)$, and let us evaluate
\begin{equation}\label{eq1}
S=\sum_{\gamma\in \hat{G}} |\hat{B}(\gamma)|^2 \hat{h}(\gamma).
\end{equation}

All terms are nonnegative, and the term corresponding to $\gamma=\one$ gives $|\hat{B}(\one)|^2\hat{h}(\one)=|B|^2\hat{h}(\one)$. Therefore

\begin{equation}\label{eq2}
S\ge |B|^2\hat{h}(\one).
\end{equation}

On the other hand, $|\hat{B}(\gamma)|^2= \sum_{j,k}\gamma(b_j-b_k)$, and therefore $S=\sum_{\gamma,j,k}\gamma(b_j-b_k)\hat{h}(\gamma)$. Summing up for fixed $j,k$ we get \\
$\sum_{\gamma}\gamma(b_j-b_k)\hat{h}(\gamma)=h(b_j-b_k)$ (the Fourier inversion formula for $h$), and therefore $S=\sum_{j,k} h(b_j-b_k)$. Notice that $j=k$ happens $|B|$-many times, and all the other terms (when $j\ne k$) are non-positive because $b_j-b_k\in A^c$, and $h$ is required to be non-positive there. Therefore
\begin{equation}\label{eq3}
S\le h(0)|B|.
\end{equation}
Comparing the two estimates \eqref{eq2}, \eqref{eq3} we obtain
\begin{equation}\label{delbound}
|B|\le \frac{h(0)}{\hat{h}(\one)}.
\end{equation}
\end{proof}

\medskip

In principle, the best witness function $h$ can be found by linear programming if $G$ is finite. In practice, the cardinality of $G$ needs to be small enough for the LP-code to be executed.

\section{Improving the Delsarte bound}

When obtaining the lower bound \eqref{eq2} we have thrown away all non-trivial terms ($\gamma\ne \one$) on the right hand side of \eqref{eq1}. This seems rather wasteful. We will try to make use of the remaining terms in this section.

\medskip

Assume we have some further restriction on the set $B$: not only must each $b_j-b_k$ fall into $A^c\cup \{0\}$ but also $B$ must be contained in some prescribed set $C\subset G$.

\begin{theorem}\label{Kfunctionbound}
Let $C\subset G$ be a measurable subset.
Assume $h$ is a witness function as in the Delsarte bound: $h: G\to \RR$, $h(x)\le 0$ for all $x\in A^c$, $\hat{h}(\gamma)\ge 0$ for all $\gamma\in \hat{G}$, and the Fourier inversion formula holds for $h$. Let $Null$ denote the set of $\gamma$'s where $\hat{h}(\gamma)=0$. Assume furthermore that we have another witness function $K:G\to \CC$ with the following properties: $K(x)\ge 1$ for $x\in C$, $\hat{K}(\one)=0$, and $\hat{K}(\gamma)=0$ for all $\gamma\in Null$. Then any $B\subset C$ such that $B-B\subset A^c \cup \{0\}$ satisfies
\begin{equation}\label{K}
|B|\le \frac{h(0)}{\hat{h}(\one)+\left (\sum_{\gamma\notin Null}\frac{|\hat{K}(\gamma)|^2}{\hat{h}(\gamma)}\right )^{-1}}
\end{equation}
\end{theorem}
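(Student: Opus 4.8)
The plan is to reuse the machinery from the proof of Theorem~\ref{delsartelemma}, but to extract a better lower bound for the quantity $S=\sum_{\gamma\in\hat G}|\hat B(\gamma)|^2\hat h(\gamma)$ of \eqref{eq1}. The upper estimate \eqref{eq3}, namely $S\le h(0)|B|$, does not use $C$ at all: it relies only on $h(b_j-b_k)\le 0$ for $j\ne k$, so it carries over verbatim. Hence the entire improvement must come from strengthening the lower bound \eqref{eq2}. Instead of discarding every term with $\gamma\ne\one$, I would retain them and bound their total from below using the second witness function $K$.

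First I would record the exact identity $S=|B|^2\hat h(\one)+T$, where $T=\sum_{\gamma\notin Null,\,\gamma\ne\one}|\hat B(\gamma)|^2\hat h(\gamma)$; here the terms with $\gamma\in Null$ contribute nothing because $\hat h(\gamma)=0$ there, and the term $\gamma=\one$ equals $|B|^2\hat h(\one)$ since $\hat B(\one)=|B|$. The role of $K$ is to control $T$ from below. Since $B\subset C$ and $K\ge 1$ on $C$, the real number $\sum_{j}K(b_j)$ is at least $|B|$. On the other hand, applying the Fourier inversion formula to $K$ and interchanging the two summations gives $\sum_j K(b_j)=\sum_{\gamma}\hat K(\gamma)\hat B(\gamma)$; the term $\gamma=\one$ drops out because $\hat K(\one)=0$, and every term with $\gamma\in Null$ drops out because $\hat K$ was required to vanish on $Null$. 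Thus $\sum_j K(b_j)=\sum_{\gamma\notin Null,\,\gamma\ne\one}\hat K(\gamma)\hat B(\gamma)$, a sum running over precisely the indices appearing in $T$, and over which $\hat h(\gamma)>0$ so that division by $\hat h(\gamma)$ is legitimate.

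Next I would apply the Cauchy--Schwarz inequality to this last sum, splitting each summand as $\bigl(\hat K(\gamma)/\hat h(\gamma)^{1/2}\bigr)\bigl(\hat B(\gamma)\,\hat h(\gamma)^{1/2}\bigr)$. This yields
\[
|B|^2\le\Bigl(\sum_{j}K(b_j)\Bigr)^2\le\Bigl(\sum_{\gamma\notin Null}\tfrac{|\hat K(\gamma)|^2}{\hat h(\gamma)}\Bigr)\cdot T,
\]
where on the left I used $\sum_j K(b_j)\ge |B|\ge 0$, and on the right I noted that adjoining the index $\gamma=\one$ to the first factor changes nothing, since $\hat K(\one)=0$. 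Writing $Q=\sum_{\gamma\notin Null}|\hat K(\gamma)|^2/\hat h(\gamma)$, this is exactly the desired lower bound $T\ge |B|^2/Q$.

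Finally I would assemble the pieces: $S=|B|^2\hat h(\one)+T\ge |B|^2\bigl(\hat h(\one)+Q^{-1}\bigr)$, and compare this with the untouched upper bound $S\le h(0)|B|$; cancelling one factor of $|B|$ gives \eqref{K}. The only genuinely new step is the Cauchy--Schwarz estimate bounding $T$ from below, and I expect that to be the one place where any real care is needed. Everything else is bookkeeping to ensure that the forbidden frequencies $\gamma=\one$ and $\gamma\in Null$ disappear from the relevant sums; the hypotheses $\hat K(\one)=0$ and $\hat K|_{Null}=0$ are exactly what make this bookkeeping go through and keep the weighted inner product well defined.
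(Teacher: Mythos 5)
Your argument is correct and follows essentially the same route as the paper: you isolate the non-trivial terms $T=\sum_{\gamma\ne\one,\gamma\notin Null}|\hat B(\gamma)|^2\hat h(\gamma)$, bound them below via Cauchy--Schwarz against the weights $|\hat K(\gamma)|^2/\hat h(\gamma)$, and identify the resulting pairing with $\sum_j K(b_j)\ge|B|$. The only cosmetic difference is that you invoke Fourier inversion for $K$ where the paper phrases the same identity as Parseval for the pair $(B,K)$; the substance is identical.
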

\begin{proof}
We will make use of the non-trivial terms in \eqref{eq1}. Namely,
\begin{equation}\label{Kproof}
\left (\sum_{\gamma\ne \one, \gamma\notin Null}|\hat{B}(\gamma)|^2 \hat{h}(\gamma)\right )\left ( \sum_{\gamma\ne \one, \gamma\notin Null}\frac{|\hat{K}(\gamma)|^2}{\hat{h}(\gamma)} \right )\ge
\end{equation}
\begin{equation*}
\left |\sum_{\gamma\ne \one, \gamma\notin Null}\hat{B}(\gamma) \overline{\hat{K}(\gamma)}\right |^2 =
\left |\sum_{\gamma\in\hat{G}}\hat{B}(\gamma) \overline{\hat{K}(\gamma)}\right |^2=\left |\sum_{x\in G} B(x)\overline{K(x)}\right |^2=
\end{equation*}
\begin{equation*}
\left |\sum_{x\in C} B(x)\overline{K(x)}\right |^2\ge |B|^2
\end{equation*}
where we used Cauchy-Schwarz, the assumptions on $\hat{K}(\gamma)$, Parseval, and the assumptions on $B(x)$ and $K(x)$, respectively. Therefore, we get an improved version of \eqref{eq2}, namely:
\begin{equation}
S\ge |B|^2\hat{h}(\one)+ \frac{|B|^2}{\sum_{\gamma\ne 0, \gamma\notin Null}\frac{|\hat{K}(\gamma)|^2}{\hat{h}(\gamma)}}.
\end{equation}
Comparing this with \eqref{eq3} yields the desired bound \eqref{K}.
\end{proof}

We see that Theorem \ref{Kfunctionbound} requires a combination of two witness functions $h(x)$ and $K(x)$ (as well as a prescribed set $C$ in which $B$ is assumed to be located). Unfortunately, it is not at all clear how to optimize $h$ and $K$ in actual applications. The best chance to apply \eqref{K} successfully arises in situations when the Delsarte bound \eqref{delbound} is already sharp. In such cases the sheer {\it existence of any} $K$ can lead to non-existence results, as we explain in the next paragraphs. Let us first state a corollary, which describes the usual situation in which Theorem \ref{Kfunctionbound} can be used.

\begin{corollary}\label{cor1}
Assume that for a given forbidden set $0\in A=-A\subset G$ we already have a witness function $h(x)$ as in Theorem \ref{delsartelemma}, testifying that $|B|\le  \frac{h(0)}{\hat{h}(\one)}=m\in \ZZ$ for any set $B\subset G$ such that $B-B\subset A^c\cup \{0\}$. Assume also that a few elements $b_1, \dots, b_k\in G$ are given with the property that $b_i-b_j\in A^c$ for all $i\ne j$. Let $D$ denote the set of elements in $G$ (different from $b_1, \dots, b_k$) such that $d-b_j\in A^c$ for all $j=1, \dots k$.  Assume furthermore that we have a second witness function $K(x)$ such that $\hat{K}(\one)=0$, $\hat{K}(\gamma)=0$ for all $\gamma\in Null$, and $\sum_{j=1}^{k} K(b_j)=1$ while $K(x)> \frac{-1}{m-k}$ for all $x\in D$ or $K(x)< \frac{-1}{m-k}$ for all $x\in D$. Then, for any $B\subset G$ such that $b_1, \dots, b_k\in B$ and $B-B\subset A^c\cup\{0\}$ we have that $|B|\le m-1$.
\end{corollary}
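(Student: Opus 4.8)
The plan is to argue by contradiction, running the Cauchy--Schwarz computation from the proof of Theorem \ref{Kfunctionbound} but replacing the inequality $K(x)\ge 1$ on $C$ by the weaker one-sided sign information supplied here. By the ordinary Delsarte bound (Theorem \ref{delsartelemma}) any admissible $B$ satisfies $|B|\le m$, so it suffices to exclude $|B|=m$; since $|B|$ is an integer, a \emph{strict} inequality $|B|<m$ will immediately give $|B|\le m-1$. I therefore assume $|B|=m$, where $b_1,\dots,b_k\in B$ and $B-B\subset A^c\cup\{0\}$.

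First I would locate the extra points of $B$. Writing $E=B\setminus\{b_1,\dots,b_k\}$, each $e\in E$ satisfies $e\ne b_j$ and $e-b_j\in A^c$ for every $j$ (the latter because $e-b_j\in B-B\subset A^c\cup\{0\}$ and $e-b_j\ne 0$), so $e\in D$ by the definition of $D$; hence $E\subset D$ and $|E|=m-k$. The key quantity is $\Sigma:=\sum_{x\in B}K(x)=\sum_{j=1}^{k}K(b_j)+\sum_{e\in E}K(e)=1+\sum_{e\in E}K(e)$, using the normalization $\sum_{j=1}^{k}K(b_j)=1$. This is exactly where the threshold $-1/(m-k)$ does its work: if $K(x)>-1/(m-k)$ throughout $D$, then $\sum_{e\in E}K(e)>(m-k)\cdot\left(-1/(m-k)\right)=-1$ and so $\Sigma>0$, whereas if $K(x)<-1/(m-k)$ throughout $D$ then $\Sigma<0$. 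In either case $\Sigma\ne 0$, so $|\Sigma|^2>0$.

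Next I would feed this into the estimate \eqref{Kproof}. Using $\hat{K}(\one)=0$, $\hat{K}(\gamma)=0$ on $Null$, and Parseval exactly as in the proof of Theorem \ref{Kfunctionbound}, the chain there identifies $\left|\sum_{\gamma\ne\one,\gamma\notin Null}\hat{B}(\gamma)\overline{\hat{K}(\gamma)}\right|^2$ with $\left|\sum_{x\in G}B(x)\overline{K(x)}\right|^2=|\Sigma|^2>0$. Consequently the second factor $\sum_{\gamma\ne\one,\gamma\notin Null}|\hat{K}(\gamma)|^2/\hat{h}(\gamma)$ is a finite positive number (positivity because $\Sigma\ne 0$ forces $\hat{K}(\gamma)\ne 0$ for some such $\gamma$), and Cauchy--Schwarz gives $\sum_{\gamma\ne\one,\gamma\notin Null}|\hat{B}(\gamma)|^2\hat{h}(\gamma)\ge |\Sigma|^2\big(\sum_{\gamma\ne\one,\gamma\notin Null}|\hat{K}(\gamma)|^2/\hat{h}(\gamma)\big)^{-1}>0$.

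Finally I would compare with \eqref{eq3}. Since the terms in $Null$ contribute nothing to $S$, we have $S=|B|^2\hat{h}(\one)+\sum_{\gamma\ne\one,\gamma\notin Null}|\hat{B}(\gamma)|^2\hat{h}(\gamma)>|B|^2\hat{h}(\one)$, a \emph{strict} version of \eqref{eq2}. Together with $S\le h(0)|B|$ from \eqref{eq3} this yields $|B|^2\hat{h}(\one)<h(0)|B|$, hence $|B|<h(0)/\hat{h}(\one)=m$, contradicting $|B|=m$; therefore $|B|\le m-1$. The only genuinely delicate point is the sign bookkeeping in the second paragraph: one must verify that the two one-sided hypotheses on $K$ over $D$ are calibrated precisely so that the $m-k$ extra terms cannot cancel the normalization $\sum_{j}K(b_j)=1$, which is what makes $\Sigma\ne 0$ and converts the otherwise tiny Cauchy--Schwarz gain into the decisive strict drop from $m$ to $m-1$.
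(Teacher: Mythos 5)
Your proposal is correct and follows essentially the same route as the paper: assume $|B|=m$, observe that the $m-k$ extra elements of $B$ lie in $D$ so that the calibration of the threshold $-1/(m-k)$ forces $\sum_{x}B(x)\overline{K(x)}\ne 0$, and feed this into the Cauchy--Schwarz chain \eqref{Kproof} to get a strict improvement over $m$, a contradiction. You merely spell out in more detail the steps the paper compresses into ``otherwise we could get some improvement on the bound.''
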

\begin{proof}
This is a direct consequence of the proof of Theorem \ref{Kfunctionbound}. Assume by contradiction that $|B|=m$. By the penultimate term of inequality \eqref{Kproof} this can only happen if $\sum_{x\in C} B(x)\overline{K(x)}= 0$ (otherwise, using \eqref{Kproof}, we could get {\it some} improvement on the bound $|B|\le \frac{h(0)}{\hat{h}(\one)}=m$). However, by the conditions above we have $\sum_{x\in C} B(x)\overline{K(x)}=1+\sum_{b\in B, b\in D} K(b)\ne 0$, because the sum is larger than zero if $K(x)> \frac{-1}{m-k}$ for all $x\in D$, while it is smaller than zero if $K(x)< \frac{-1}{m-k}$ for all $x\in D$. Therefore, $B$ can contain at most $m-1$ elements.
\end{proof}

\section{Application to mutually unbiased bases (MUBs)}

We now turn to an elegant application of Corollary \ref{cor1} to the problem of mutually unbiased bases (MUBs). What makes this application possible is the fact that the Delsarte bound is already sharp in the MUB problem, as explained below (see also \cite{mubfourier} for more details, where this idea was introduced).

\medskip

We will use the formulation of the MUB problem in terms of complex Hadamard matrices.
A complex Hadamard matrix $H$ is a complex orthogonal matrix whose entries are of modulus 1. Two such matrices $H_1, H_2$ are called unbiased if any two columns $\vu\in H_1, \vw\in H_2$ satisfy $|\langle \vu, \vw \rangle = |\sqrt{n}$. A convenient formulation of the MUB problem is whether there exists a system $H_1, \dots , H_n$ of pairwise mutually unbiased complex Hadamard matrices (MUHs) in dimension $n$. The answer is known to be positive if $n$ is a prime-power (se e.g. \cite{BBRV,Iv,KR,WF}), while the problem is open for any non-prime-power dimensions.

\medskip

Assume that $H_1, \dots H_r$ is a system of mutually unbiased complex Hadamard matrices.
The columns of each $H_j$ are unimodular vectors which can be considered as elements of
the group $G=\TT^n$, where $\TT=\{ z\in \CC: |z|=1\}$. The group operation on $G$ is
coordinate-wise multiplication, the unit element is the constant 1 vector denoted by
$\one$, and the dual group $\hat G$ is $\ZZ^n$ the unit of which is denoted by $\bf{0}$. (In this particular application it is more convenient to use the multiplication operation on $\cg$ and the addition operation on $\hat \cg$.)
Also, any two distinct column vectors in this system must be either orthogonal or
unbiased to each other (depending on whether they belong to the same matrix or not).
Therefore, any two distinct columns $\vv, \vw$ satisfy $|\sum_{j=1}^{n} v_j
\overline{w_j}|^2= |\sum_{j=1}^{n} v_j /w_j|^2=0 \ \textrm{or} \ n$. In the language of Theorem \ref{delsartelemma} this means that $\vv/\vw$ must fall into the set $A^c=\{\vz\in\TT: \sum_{j=1}^{n} z_j=0\}\cup \{\vz\in\TT: |\sum_{j=1}^{n} z_j|^2=n\}$. Consider now the
witness function $h: G\to \RR$, $h(\vz)=|z_1+\dots +z_n|^2 (|z_1+\dots z_n|^2-n)$. It is
fairly easy to check that this function satisfies all the conditions listed in Theorem
\ref{delsartelemma}, and $\frac{h(\one)}{\hat{h}(\bf{0})}=n^2$. This testifies that the
total number of column vectors in the matrices $H_j$ cannot be larger than $n^2$, and
hence the number of MUHs cannot be larger than $n$.

\medskip

Of course, this bound cannot be improved if $n$ is a prime-power,
because a full system of MUBs (or, equivalently, MUHs) actually exists
for such $n$. However, for any given Hadamard matrix $H$ one can try to use
Corollary \ref{cor1} to rule out the possibility that $H$ could
be part of a full system of MUHs. In view of the witness function $h$ above, all we
need is a suitable function $\vz\mapsto K(\vz)$ which is a linear
combination of terms of the form $z_iz_j\overline{z}_k\overline{z}_l$
with $\{i,j\}\neq \{k,l\}$ (since these
are exactly the non-constant terms appearing in $h$, this will ensure $\hat K(\gamma)=0$ for all $\gamma\in Null$) and satisfies the bounds given in
Corollary \ref{cor1}.

\medskip

We shall now demonstrate the power of this method by an actual example.
In dimension $6$, the fact that no Hadamard matrix
$F(a,b)$ of the Fourier family can be part of a full system of MUHs was proven by a massive computer search
using a discretization scheme in \cite{pauli}. In particular, there is no way to check that proof by hand. Here we shall give a
simple proof of this statement requiring no computer assistance.

\begin{theorem}\label{nofab}
In dimension $n=6$, no complex Hadamard matrix
$F(a,b)$ of the Fourier family, or $F^T(a,b)$ of the transposed Fourier family can be extended to a full system of MUHs.
\end{theorem}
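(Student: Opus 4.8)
The plan is to apply Corollary \ref{cor1} to the group $G=\TT^6$, with the forbidden set $A$ and the witness function $h(\vz)=|z_1+\dots +z_6|^2(|z_1+\dots +z_6|^2-6)$ described above, for which $m=h(\one)/\hat h(\mathbf{0})=n^2=36$. Suppose, for contradiction, that some $F(a,b)$ belongs to a full system of $6$ MUHs. Its six columns, regarded as vectors $b_1,\dots ,b_6\in\TT^6$, are pairwise orthogonal, so $b_i/b_j\in A^c$ for $i\ne j$, and the whole system consists of $m=36$ vectors; hence I take $k=6$, so $m-k=30$ and the threshold in Corollary \ref{cor1} is $-1/30$. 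The first step is to pin down the set $D$ of candidates $\vx$ with $\vx/b_j\in A^c$ for all $j$. Writing $u_j(\vx)=\langle\vx,b_j\rangle$, the vector $\vx/b_j$ has coordinate sum $u_j(\vx)$, so $\vx\in D$ means $u_j(\vx)=0$ or $|u_j(\vx)|^2=6$ for every $j$. Since the columns of a Hadamard matrix satisfy $\sum_jb_jb_j^{*}=6I$, one has the identity $\sum_{j=1}^{6}|u_j(\vx)|^2=6\|\vx\|^2=36$ for all $\vx\in\TT^6$; combined with $|u_j(\vx)|^2\in\{0,6\}$ this forces $|u_j(\vx)|^2=6$ for every $j$. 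Thus $D$ is exactly the set of vectors unbiased to all six columns of $F(a,b)$.

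It remains to build a witness $K$ of the prescribed shape --- a linear combination of monomials $z_iz_j\bar z_k\bar z_l$ with $\{i,j\}\ne\{k,l\}$, which guarantees $\hat K(\mathbf{0})=0$ and $\hat K=0$ on $Null$ --- satisfying $\sum_{j=1}^{6}K(b_j)=1$ and lying strictly on one side of $-1/30$ throughout $D$. The first attempt is the modulus witness $K_0(\vz)=\frac{1}{5400}\big(\sum_j|u_j(\vz)|^4-396\big)$, for which $\hat K_0(\mathbf{0})=0$, $K_0\equiv 900/5400$ on each column (so $\sum_jK_0(b_j)=1$), and $K_0\equiv(216-396)/5400=-1/30$ on $D$. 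A short computation --- using $\sum_j|u_j|^2\equiv36$ and the orthogonality $\langle b_j,b_k\rangle=0$ --- shows more: every admissible witness of degree at most four built only from the moduli $|u_j|$ is constant on $D$, and the two normalizations $\hat K(\mathbf{0})=0$ and $\sum_jK(b_j)=1$ pin that constant to exactly $-1/30$. The Delsarte bound is therefore sharp against all modulus-only witnesses, and any improvement must come from a phase-sensitive correction.

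Accordingly I would look for $K=K_0+L$, where $L$ is a Hermitian form with vanishing diagonal in the orthonormal column basis $\{b_j/\sqrt6\}$; such an $L$ has $\hat L(\mathbf{0})=\operatorname{tr}L=0$, is supported on the admissible degree-two characters $z_i\bar z_k$ ($i\ne k$), and satisfies $L(b_i)=0$ for every column, so the normalization $\sum_jK(b_j)=1$ survives. The task then reduces to producing such an $L$ with $L(\vx)>0$ (say) for all $\vx\in D$: then $K(\vx)=-1/30+L(\vx)>-1/30$ on $D$, and Corollary \ref{cor1} yields $|B|\le 35$, contradicting $|B|=36$. Parametrizing $\vx\in D$ through its column coordinates, $\vx=\frac{1}{\sqrt6}\sum_j e^{i\theta_j}b_j$ subject to the unimodularity constraints $|(F(a,b)\vec\phi)_i|^2=6$ --- that is, $D$ is the set of bi-unimodular vectors of $F(a,b)/\sqrt6$ --- the value of $L$ on $D$ takes the form $6\,\re\sum_{j\ne k}N_{jk}e^{i(\theta_j-\theta_k)}$ for a Hermitian matrix $N$, and its sign must be controlled over this constraint variety. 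I expect this to be the main obstacle: one must use the explicit entries of the Fourier family (its $2\times3$ block structure and the phases $a,b$) to describe the bi-unimodular set concretely --- for this family it is small enough that the verification becomes finite --- and then read off a Hermitian $N$ (or, should a degree-two form prove insufficient, a suitable phase-sensitive quartic) whose associated expression is strictly of one sign, crucially uniformly in the parameters $a,b$.

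Finally, the transposed family $F^{T}(a,b)$ is treated by the same scheme: either one notes that coordinatewise complex conjugation $\vz\mapsto\bar\vz$ together with transposition is a symmetry of the problem preserving $A^c$, the witness $h$, and the admissible class of $K$, carrying the statement for $F(a,b)$ to that for $F^{T}(a,b)$; or, absent a clean symmetry, one simply repeats the construction of $K_0+L$ with the entries of $F^{T}(a,b)$, the only nontrivial point once again being the uniform sign of the correction on the corresponding set $D$. In every case Corollary \ref{cor1} gives $|B|\le 35$, so no $F(a,b)$ or $F^{T}(a,b)$ can be extended to a full system of MUHs, which is Theorem \ref{nofab}.
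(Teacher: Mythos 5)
Your setup is sound and matches the paper's framework: you correctly take $m=36$, $k=6$, threshold $-1/30$, and you correctly show that $\sum_{j}|\langle \vx,\vb_j\rangle|^2=36$ forces every candidate in $D$ to be unbiased (not orthogonal) to all six columns. Your observation that any witness built only from the moduli $|\langle\vz,\vb_j\rangle|$ is identically $-1/30$ on $D$, so that a phase-sensitive correction is indispensable, is a genuine and correct insight. But the proof stops exactly where the real work begins: you never produce the witness $K$. The passages ``I would look for $K=K_0+L$,'' ``I expect this to be the main obstacle,'' and ``one must \dots read off a Hermitian $N$ \dots whose associated expression is strictly of one sign, crucially uniformly in the parameters $a,b$'' describe the problem to be solved, not its solution. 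Establishing a uniform sign over the bi-unimodular variety of $F(a,b)/\sqrt6$, uniformly in $a,b$, is the entire content of the theorem, and nothing in the proposal accomplishes it; it is also far from clear that a quadratic Hermitian correction $L$ of the kind you propose can work at all.

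For comparison, the paper's proof constructs $K$ explicitly from the $2\times 3$ block structure of $F^T(a,b)$: writing $\vz=\bigl(\begin{smallmatrix}\vz_\uparrow\\ \vz_\downarrow\end{smallmatrix}\bigr)$, it takes $K(\vz)=\frac{1}{486}\sum_{j}\bigl(\langle\vz_\uparrow,\vf_j\rangle\,\langle c_j\vf_j,\vz_\downarrow\rangle\bigr)^2$ with $c_0=1$, $c_1=a$, $c_2=b$ --- a phase-sensitive quartic, i.e.\ squares of the inner products themselves rather than of their moduli. The key mechanism is that unbiasedness to the paired columns $\bigl(\begin{smallmatrix}\vf_j\\ \pm c_j\vf_j\end{smallmatrix}\bigr)$ forces each product $\langle\vz_\uparrow,\vf_j\rangle\overline{\langle\vz_\downarrow,c_j\vf_j\rangle}$ to be purely imaginary, so its square equals $-s_j(6-s_j)$ with $s_j=|\langle\vz_\uparrow,\vf_j\rangle|^2$; since $s_0+s_1+s_2=9$, one gets $K(\vz)=(s_0^2+s_1^2+s_2^2-54)/486$, and a small two-variable Lagrange optimization shows $0.843<c\le s_j\le 6-c$, whence $s_0^2+s_1^2+s_2^2<37$ and $K(\vz)<-17/486<-1/30$ on all of $D$ --- with no need to describe the unbiased vectors explicitly. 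This purely-imaginary trick and the ensuing reduction to a constrained scalar optimization are the ideas missing from your proposal. (Your reduction of $F^T$ to $F$ via conjugation/transposition is fine and agrees with the paper's first remark.)
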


\begin{proof}
It is trivial that any complex Hadamard matrix $H$ can be extended to a full system of MUHs
if and only if its conjugate $\overline{H}$, adjoint $H^\ast$ or transpose $H^T$ can. Therefore we may restrict our attention to the transposed Fourier family $F^T(a,b)$. The usual parametrization of $F^T(a,b)$ is given in \cite{karol}. However, it will be more convenient for us to permute rows and columns and work with an equivalent parametrization given by the following column vectors:

\begin{equation}
\label{eq:fab}
\left(\begin{matrix}
\vf_0 \\ \vf_0
\end{matrix}\right),
\,
\left(\begin{matrix}
\vf_0 \\ -\vf_0
\end{matrix}\right),
\,
\left(\begin{matrix}
\vf_1 \\ a\vf_1
\end{matrix}\right),
\,
\left(\begin{matrix}
\vf_1 \\ -a\vf_1
\end{matrix}\right),
\,
\left(\begin{matrix}
\vf_2 \\ b \vf_2
\end{matrix}\right),
\,
\left(\begin{matrix}
\vf_2 \\ -b\vf_2
\end{matrix}\right)
\end{equation}
where
\begin{equation}
\left(\begin{matrix}
\vf_0 &
\vf_1 &
\vf_2
\end{matrix}\right) =
\left(\begin{matrix}
1 & 1 & 1 \\
1 & e^{i\frac{2\pi}{3}}  &  e^{-i\frac{2\pi}{3}} \\
1 & e^{-i\frac{2\pi}{3}} & e^{i\frac{2\pi}{3}}
\end{matrix}\right)
\end{equation}
and $a,b\in \TT$ are two complex unit parameters. With a slight abuse of notation we will still denote the matrix formed by the six columns above by $F^T(a,b)$. Also, we will denote the columns in \eqref{eq:fab}  by $\vb_1, \dots, \vb_6$, in accordance with the notation of Corollary \ref{cor1}. Note that the set $D$ appearing in Corollary \ref{cor1} consists of the vectors $\vz\in \TT^6$ which are unbiased to $\vb_1, \dots, \vb_6$.

\medskip

Now we define the second witness function $K(\vz).$ For any $\vz\in \TT^6$ written in the form
\begin{equation}
\vz=
\left(\begin{matrix}
\vz_{\uparrow} \\ \vz_{\downarrow}
\end{matrix}\right),\;\;\;\;\;
\vz_{\uparrow}, \vz_{\downarrow}\in \TT^3
\end{equation}
let $K(\vz)=$
\begin{equation}
\frac{1}{N} \left[
\left(\langle\vz_{\uparrow},\vf_0,\rangle \,
\langle \vf_0,\vz_{\downarrow}\rangle \right)^2
+
\left(\langle\vz_{\uparrow},\vf_1,\rangle \,
\langle a\vf_1,\vz_{\downarrow}\rangle \right)^2
+
\left(\langle\vz_{\uparrow},\vf_2,\rangle \,
\langle b\vf_2,\vz_{\downarrow}\rangle \right)^2
\right]
\end{equation}
where the normalizing term $N=6\cdot (3\cdot 3)^2=486$ is chosen so
that the sum taken over the columns $\vb_1,\ldots,\vb_6$ of $F^T(a,b)$ is
$\sum_{j=1}^6K(\vb_j)=1$. (This is trivial to check.)

\medskip

The function $K$ is a linear
combination of terms of the form $z_iz_j\overline{z}_k\overline{z}_l$
with $\{i,j\}\neq \{k,l\}$, just as the witness function $h$. In order to apply Corollary \ref{cor1} we need to estimate the value of
$K(\vz)$ whenever $\vz\in\TT^6$ is an unbiased
vector to our Hadamard matrix $F^T(a,b)$. We will show that $K(\vz)<-\frac{1}{30}$, as required in Corollary \ref{cor1}. It is interesting to note here that we will be able to do this {\it without} the explicit knowledge of the unbiased vectors $\vz$.

\medskip

In what follows, suppose $\vz= \left(\begin{matrix} \vz_{\uparrow} \\
\vz_{\downarrow} \end{matrix}\right)\in \TT^6$ is an unbiased vector to the
matrix $F^T(a,b)$. In particular, it is unbiased to the first
two columns $\vb_1, \vb_2$ listed at (\ref{eq:fab}), which means
\begin{equation}
|\langle \vz_\uparrow,\vf_0 \rangle +
\langle \vz_\downarrow,\vf_0 \rangle |^2
=6=
|\langle \vz_\uparrow,\vf_0 \rangle -
\langle \vz_\downarrow,\vf_0 \rangle |^2.
\end{equation}
This implies that the product
$\langle \vz_\uparrow,\vf_0 \rangle
\overline{\langle \vz_\downarrow,\vf_0 \rangle }$
(whose square appears in the definition of $K(\vz)$) is purely
imaginary. Thus
$|\langle \vz_\uparrow,\vf_0 \rangle|^2+|\langle
\vz_\downarrow,\vf_0 \rangle|^2=6$ and the term
$\left(\langle\vz_{\uparrow},\vf_0,\rangle \,
\langle \vf_0,\vz_{\downarrow}\rangle \right)^2 =
$
\begin{equation}
-
|\langle \vz_\uparrow,\vf_0 \rangle|^2 \,|\langle
\vz_\downarrow,\vf_0 \rangle|^2 =
-|\langle \vz_\uparrow,\vf_0 \rangle|^2 \left(6-
|\langle \vz_\uparrow,\vf_0 \rangle|^2\right).
\end{equation}
Similarly, introducing the notation $s_j=|\langle \vz_\uparrow,\vf_j \rangle|^2$ ($j=0,1,2$) we have that
\begin{equation}\label{sj}
6-s_j=|\langle \vz_\downarrow,\vf_j \rangle|^2
\end{equation}
and obtain
\begin{equation}
K(\vz)
=-\frac{1}{N}(s_0(6-s_0)+s_1(6-s_1)+s_2(6-s_2)).
\end{equation}
This can be further simplified using that
\begin{equation}\label{s9}
s_0+s_1+s_2=\sum_{j=0}^2 |\langle \vz_\uparrow,f_j\rangle|^2 = 3
\|\vz_\uparrow\|^2 =3\cdot 3 = 9
\end{equation}
as $
\frac{1}{\sqrt{3}}
\vf_0,
\frac{1}{\sqrt{3}}
\vf_1,
\frac{1}{\sqrt{3}}
\vf_2$ is an orthonormal basis of $\mathbb C^3$. Therefore, after simplification,
\begin{equation}
K(\vz)=
\frac{s_0^2+s_1^2+s_2^2-54}{486}
\; \textrm{where}\;\; s_0,s_1,s_2\ge 0, \; s_0+s_1+s_2=9.
\end{equation}
Note that in general the value of $K(\vz)$ is not necessarily real,
but the formula above shows that it is so when evaluated at a vector $\vz$ which is unbiased to $F^T(a,b)$.

\medskip

Note that $0\le s_0, s_1, s_2 \le 6$ by \eqref{sj}. Furthermore, we will see that the values of $s_j$ cannot be close to 0 or 6. Indeed, consider the following optimization problem: minimize $|\langle \vf_0, \vu \rangle |^2$ over all $\vu\in \TT^3$ subject to the constraints $|\langle \vf_1, \vu \rangle |^2\le 6$, $|\langle \vf_2, \vu \rangle |^2\le 6$. We can assume without loss of generality that the first coordinate of $\vu$ is 1, so that
\begin{equation}
\vu=\left(\begin{matrix}
1  \\
e^{i\alpha}  \\
e^{i\beta}
\end{matrix}\right)
\end{equation}
For the discussion below introduce the notations $g_j(\alpha, \beta)=|\langle \vf_j, \vu \rangle |^2$, $j=0, 1, 2$.
The two-parameter optimization problem above can be solved by standard methods. First, by a trivial compactness argument the minimum is actually attained at some point $(\alpha^\ast, \beta^\ast)$. Second, the point $(\alpha^\ast, \beta^\ast)$ must satisfy one of the following:

\smallskip

\noindent (i) the derivative of $g_0(\alpha, \beta)$ is zero at  $(\alpha^\ast, \beta^\ast)$;\\
(ii) both constraints hold with equality, i.e. $g_j (\alpha^\ast, \beta^\ast)=6$ for $j=1, 2$;  \\
(iii) one constraint holds with equality (say,  $g_1(\alpha^\ast, \beta^\ast)=6$), and by the method of Lagrangian multipliers we have
$(\partial_\alpha g_0) (\partial_\beta g_1)= (\partial_\beta g_0) (\partial_\alpha g_1$) at $(\alpha^\ast, \beta^\ast)$.

\medskip

It is easy to see that (ii) cannot happen (because $g_1(\alpha, \beta)+g_2(\alpha, \beta)\le 9$), while the cases of (i) are easy to determine and they either do not satisfy the side constraints $g_j(\alpha, \beta)\le 6$, or do not lead to the actual minimum of the optimization problem. The actual minimum occurs in case (iii), which leads to the following system of equations (after introducing the variables $x_1=\cos \alpha, x_2=\cos \beta, y_1=\sin \alpha, y_2=\sin \beta)$:
\begin{eqnarray*}\label{ab}
 - x_1 - x_2 - x_1 x_2 + \sqrt{3} y_1 - \sqrt{3} x_2 y_1 - \sqrt{3} y_2 +
  \sqrt{3} x_1 y_2 - y_1 y_2 - 3 = 0,\\
2 \sqrt{3} x_2 y_1 - 4 \sqrt{3} x_1 x_2 y_1 + 2 \sqrt{3} x_2^2 y_1 +
  2 \sqrt{3} x_1 y_2 +
   2 \sqrt{3} x_1^2 y_2 \\
   -4 \sqrt{3} x_1 x_2 y_2
   -2 \sqrt{3} y_1^2 y_2 - 2 \sqrt{3} y_1 y_2^2 = 0,\\
   x_1^2 + y_1^2 - 1 =0,\\
x_2^2 + y_2^2 - 1 = 0.
\end{eqnarray*}

This system can be solved exactly (by hand if necessary, but more conveniently with computer algebra), and leads to the lower bound $g_0(\alpha, \beta)\ge c=\frac{3}{2}-\frac{3}{2}\sqrt{16\sqrt{6}-39}$, showing that $s_0\ge c> 0.843$. Consequently, we also obtain $s_0\le 6-c$, because $6-s_0=|\langle \vz_\downarrow,\vf_0 \rangle|^2$ must satisfy the same optimization problem. The same argument applies to $s_1$ and $s_2$, giving the bounds $c\le s_0, s_1, s_2 \le 6-c$. Together with the fact that $s_0+s_1+s_2=9$ this implies $s_0^2+s_1^2+s_2^2\le c^2+(6-c)^2+3^2<37$. Hence $K(\vz)=\frac{s_0^2+s_1^2+s_2^2-54}{486}<-\frac{17}{486}<-\frac{1}{30}$, and Corollary \ref{cor1} applies.
\end{proof}

\medskip

We remark that Corollary \ref{cor1} could have further similar applications in the future. For example, it is natural to try to prove in a similar manner that in dimension 6 the matrices $D(c)$ of the Dita-family cannot be extended to a full system of MUHs. The method could also be applied to the Fourier matrix $F_n$ for any composite $n$, in which case we conjecture that $F_n$ cannot be extended to a full set of MUHs. More generally, in any problem where Delasrte's method gives an upper bound, Theorem \ref{Kfunctionbound} might lead to an improvement if a suitable second witness function $K$ can be found.

\medskip

Finally, we remark that Corollary \ref{cor1} could be applied together with the discretization scheme described in \cite{morapetis}. A witness function $K$ may exist even if the entries of the first Hadamard matrix $H_1$ are only known to some precision. In principle, this can lead to a major improvement of the running time of the discretization method.

\section{Acknowledgement}

The authors thank I. Z. Ruzsa for helpful discussions on the subject and for an alternative solution
of the optimization problem presented at the end of the proof of Theorem \ref{nofab}.

\end{document}